\documentclass[11pt]{article}
\usepackage{amsmath}
\usepackage{amsthm}
\usepackage{amssymb}

\textwidth150mm
\textheight200mm
\oddsidemargin10mm
\evensidemargin10mm

\topmargin20mm
\topskip0mm

\newtheorem{theo}{Theorem}[section]
\newtheorem{definition}{Definition}[section]

\newtheorem{lemma}[theo]{Lemma}

\setcounter{MaxMatrixCols}{20}
\begin{document}
\date{}

\title{Noisy Corruption Detection}

\author{
Ryan Alweiss
\thanks
{Department of Mathematics, Princeton University,
Princeton, NJ 08544, USA.
Email: {\tt alweiss@math.princeton.edu}. Research supported by an NSF Graduate Research Fellowship.
}}

\maketitle
\begin{abstract}
We answer a question of Alon, Mossel, and Pemantle about the corruption detection model on graphs in the noisy setting.
\end{abstract}

\section{Introduction}
Alon, Mossel, and Pemantle \cite{amp} considered the following one-player game.  A finite simple connected graph $G$ on $n$ vertices is given, and each vertex is either 	``truthful" or ``corrupt".  The player may query a vertex $u$ about whether its neighbor $v$ is truthful or corrupt.  Truthful vertices always accurately report the status of their neighbors, but corrupt vertices will report adversarial answers.  The setting in particular that they consider is where the graph $G$ is a constant-degree expander.

\begin{definition}\emph{(\cite{amp})} 
A \emph{$\delta$-good expander} for a constant $\delta>0$ is a graph $G$ with $|V(G)|=n$ so that all $U \subset V(G)$ with $|U| \le 2\delta n$ have more than $|U|$ neighbors outside $U$, and for every $U, W \subset V(G)$ with $|U| \ge \delta n$ and $|W| \ge n/4$, there is an edge between some vertex in $U$ and some vertex in $W$.
\end{definition} 

The main result of Alon, Mossel, and Pemantle is the following.

\begin{theo}
\label{thm11}
Let $G$ be a $\delta$-good expander, and let $V(G)=T \sqcup B$ be a partition of $V(G)$ into truthful vertices $T$ and corrupt vertices $B$.  If $|T|>|B|$, then after asking every vertex about all its neighbors, the player can find $T' \subset T$ and $B' \subset B$ so that $|T' \cup B'|>(1-\delta)n$.  \end{theo}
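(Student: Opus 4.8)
My plan is to recast the problem in terms of which partitions of $V(G)$ are ``consistent'' with the answers the player receives, and to show that all consistent partitions with a truthful majority agree with one another outside a set of fewer than $\delta n$ vertices. First I would record a purely structural fact about $G$: for every $U\subseteq V(G)$ with $|U|>n/2$, the induced subgraph $G[U]$ has a connected component $C_U$ with $|U\setminus C_U|<\delta n$. This follows from the two expander axioms by a short case analysis: if $|U\setminus C_U|\ge\delta n$ one can split $U$ into two vertex-disjoint unions of components of $G[U]$, one of size at least $\delta n$ and the other of size at least $n/4$, and then the edge-existence axiom produces an edge between two different components of $G[U]$, a contradiction. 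Applying this to $T$ yields its ``giant component'' $C$, with $|T\setminus C|<\delta n$ and hence $|C|>n/2-\delta n>n/4$ (note $\delta<1/4$ is forced by the vertex-expansion axiom).

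Call $S\subseteq V(G)$ \emph{plausible} if $|S|>n/2$ and every $u\in S$ reported each of its neighbours $v$ as truthful exactly when $v\in S$; let $\mathcal S$ be the family of plausible sets. The true set $T$ is plausible, since truthful vertices answer honestly. The player will output $T'=\bigcap\mathcal S$ and $B'=V(G)\setminus\bigcup\mathcal S$; then $T'\subseteq T$ and $B'\subseteq B$ hold automatically because $T\in\mathcal S$, so it suffices to show the ``ambiguous'' set $\bigcup\mathcal S\setminus\bigcap\mathcal S$ has size less than $\delta n$. I would establish two facts. (1) Every $S\in\mathcal S$ contains $C$: a truthful vertex of $S$ answers honestly yet is required by plausibility to be consistent with $S$, so no truthful vertex of $S$ is adjacent to $S\,\triangle\,T$; hence $S\cap T$ is a union of components of $G[T]$, and if it missed $C$ then $S\cap T$ would be a nonempty set of size $<\delta n$ all of whose neighbours outside it lie in $B\setminus S$, so vertex expansion would give $|B\setminus S|>|S\cap T|$ and therefore $|S|=|S\cap T|+(|B|-|B\setminus S|)<|B|<n/2$, a contradiction. (2) For the same reason no vertex of $C$ is adjacent to $S\,\triangle\,T$ for any $S\in\mathcal S$; in particular $C$ has no edge to $Y:=B\cap\bigcup\mathcal S$, nor (being a single component of $G[T]$) to $T\setminus C$.

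To finish, put $M=(T\setminus C)\cup Y$. By (1) every truthful vertex lying outside some $S\in\mathcal S$ belongs to $T\setminus C$, and by definition every corrupt vertex lying inside some $S\in\mathcal S$ belongs to $Y$, so $\bigcup\mathcal S\setminus\bigcap\mathcal S\subseteq M$. But $M$ is disjoint from $C$ and, by (2), has no edge to $C$; since $|C|>n/4$, the edge-existence axiom forbids $|M|\ge\delta n$. Hence fewer than $\delta n$ vertices are ambiguous, and the player's $T',B'$ satisfy $|T'\cup B'|>(1-\delta)n$.

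The conceptual heart of the argument — and the step I expect to be the main thing to discover — is the realisation that the family of plausible truthful-majority sets is squeezed tightly around the giant component $C$, so that the adversary's only remaining freedom is to move vertices inside the small ``remainder'' $M$, which expansion then controls. The one genuinely technical point is the case analysis behind the structural fact in the first paragraph, which I expect to go through cleanly at least when $\delta$ is below an absolute constant, the only regime in which the hypothesis is interesting.
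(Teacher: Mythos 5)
The statement you are proving is Theorem~\ref{thm11}, which this paper does not prove at all: it is quoted from Alon, Mossel, and Pemantle \cite{amp} as background, so there is no in-paper proof to compare against. Judged on its own, your argument is essentially correct and complete. The chain plausible set $\Rightarrow$ $S\cap T$ is a union of components of $G[T]$ $\Rightarrow$ every plausible $S$ contains the giant component $C$ $\Rightarrow$ the ambiguous vertices all lie in $M=(T\setminus C)\cup(B\cap\bigcup\mathcal S)$, which has no edge to $C$ and is therefore smaller than $\delta n$, is sound, and each application of the two expander axioms is legitimate (in particular $|S\cap T|<\delta n\le 2\delta n$ for the vertex-expansion step, and $|C|>n/2-\delta n\ge n/4$ for the edge-existence step).

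The one point to be honest about is the structural lemma in your first paragraph, and you have flagged it yourself. Your case analysis splits cleanly when some component of $G[U]$ has size at least $\delta n$ (take it, or its complement in $U$, as the $\delta n$-side), but in the remaining case, where every component has size below $\delta n$, the greedy accumulation produces a part of size less than $2\delta n$, leaving the other part of size only greater than $n/2-2\delta n$; this is at least $n/4$ only when $\delta\le 1/8$. For $\delta\in(1/8,1/4)$ one can in fact write down component profiles (e.g.\ three components of size about $n/6$ in a set of size just over $n/2$) that violate no instance of either axiom, so the structural fact is not a formal consequence of the definition in that range and your proof, as written, covers only $\delta\le 1/8$. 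Since the theorem is stated for a general $\delta$-good expander, you should either state this restriction explicitly or supply a different argument for larger $\delta$; in the regime where these expanders are actually constructed and used, $\delta$ is a small constant, so the restriction is harmless in practice but should not be left implicit.
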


Interestingly, finding $T'$ and $B'$ efficiently depends on how large $T$ is compared to $B$.

\begin{theo}
\label{thm111}
In the above setting, if $|T|>(1/2+\delta)n$, there is an algorithm that finds such $T', B'$ in linear time.  
\end{theo}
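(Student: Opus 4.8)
The plan is to work with the ``mutual‑agreement graph'' $H$ on $V(G)$, where $uv$ is an edge of $H$ exactly when $uv\in E(G)$ and each of $u,v$ reports the other as truthful. A truthful vertex never calls a truthful neighbour corrupt and never calls a corrupt neighbour truthful, so $H$ contains every edge of $G[T]$, contains no edge between $T$ and $B$, and has $E(H)\subseteq E(G)$; hence $H[T]=G[T]$ and every connected component of $H$ lies entirely inside $T$ or entirely inside $B$. Since $G$ has constant degree, building $H$ from the answers and computing its connected components takes linear time. The goal of the remaining analysis is to show that the honest part of $H$ contains one component that is recognizable purely by its size.

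Next I would analyse the components of $G[T]$ using only the ``large‑set edge'' clause of the definition of a $\delta$‑good expander. First, $G[T]$ has a component $C$ with $|C|\ge n/4$: otherwise, adding components of $G[T]$ one at a time until the running total first reaches $n/4$ produces a set $P$ with $n/4\le |P|<n/2$ (each component has size below $n/4$), and then $|T\setminus P|>(1/2+\delta)n-n/2=\delta n$, so the edge clause forces an edge between the subsets $P$ and $T\setminus P$ of $T$ lying in different components of $G[T]$, a contradiction. The same clause shows $C$ is unique and that $|T\setminus C|<\delta n$ (else $C$ and $T\setminus C$ would be joined inside $T$), whence $|C|=|T|-|T\setminus C|>(1/2+\delta)n-\delta n=n/2$. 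Every other component of $H$ has size at most $\max(|T\setminus C|,\,|B|)\le\max(\delta n,\,(1/2-\delta)n)<n/2<|C|$, so $C$ is the \emph{unique} largest component of $H$; the algorithm therefore simply outputs the largest component of $H$ and is certain it equals $C\subseteq T$. This is precisely the step where $|T|>(1/2+\delta)n$ is needed rather than merely $|T|>|B|$: it makes the honest giant component an absolute majority, hence identifiable by size alone.

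Finally, with a known set $C\subseteq T$ in hand (and $|C|\ge n/4$), I would set $T'=C\cup\{\,v:\ \text{some neighbour of }v\text{ in }C\text{ reports }v\text{ truthful}\,\}$ and $B'=\{\,v:\ \text{some neighbour of }v\text{ in }C\text{ reports }v\text{ corrupt}\,\}$. Since every vertex of $C$ is truthful its reports are accurate, so there are no conflicting reports, $T'\subseteq T$, $B'\subseteq B$, and $T'\cap B'=\emptyset$. Moreover $T'\cup B'\supseteq N[C]$, and $V\setminus N[C]$ is a set with no edge to $C$, so by the edge clause (using $|C|\ge n/4$) it has fewer than $\delta n$ vertices; hence $|T'\cup B'|\ge |N[C]|>(1-\delta)n$. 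All steps — forming $H$, computing components, selecting $C$, and a single pass over the edges incident to $C$ — are linear. The only genuinely delicate point is the structural claim in the second paragraph, i.e. verifying carefully that the large‑set expansion property yields a single honest component of size exceeding $n/2$ that dominates all components of $H$; once that is in place, the construction of $T'$ and $B'$ is routine bookkeeping.
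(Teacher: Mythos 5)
Your argument is correct, but note that the paper itself gives no proof of this theorem: it is quoted from Alon, Mossel, and Pemantle \cite{amp}, so there is no in-paper proof to compare against. Judged on its own, your proof is complete and self-contained. The key facts all check out: in the noiseless model the mutual-agreement graph $H$ satisfies $H[T]=G[T]$ and has no $T$--$B$ edges, so its components are monochromatic; the ``large-set edge'' clause of $\delta$-goodness (applied to a union $P$ of components with $n/4\le |P|<n/2$ and to $T\setminus C$ versus $C$) forces a unique component $C$ of $G[T]$ with $|T\setminus C|<\delta n$ and hence $|C|>n/2$, which strictly dominates every component inside $B$ (of size at most $(1/2-\delta)n$) and every other component inside $T$; and the same clause applied to $V\setminus N[C]$ versus $C$ gives $|N[C]|>(1-\delta)n$, so classifying $N[C]$ by the (accurate) reports of $C$ yields $T'\subseteq T$, $B'\subseteq B$ with $|T'\cup B'|>(1-\delta)n$. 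This is exactly where the hypothesis $|T|>(1/2+\delta)n$ is used, since it makes the honest giant component an absolute majority and hence recognizable by size alone. All steps are linear time on a constant-degree graph. The one small presentational point is that you use only the second clause of the $\delta$-good definition (and implicitly $\delta<1/2$, which is forced by $|T|\le n$); it would be worth saying explicitly that the vertex-expansion clause is not needed for this direction.
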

However, if we only assume $|T|>|B|$ one cannot expect a polynomial time algorithm unless $P=NP$.

Even in this ``noiseless" case where truthful vertices always report accurately, the player cannot exactly identify $T$ and $B$.  For example, say $x$ and $y$ have no neighbors in $T$ and correctly report that all their neighbors are in $B$. Then, the player cannot distinguish the case $x \in B, y \in T$ from $x \in T, y \in B$.  The authors then ask about the corresponding results in the noisy case, where now a truthful vertex answers correctly and independently with probability $1-\epsilon$ for some fixed $\epsilon>0$ which we consider to be constant.  In particular, if a vertex $u$ is asked about a neighbor $v$ multiple times, it may answer differently each time.  Thus, the appropriate parameter to consider is the number of queries needed so that with high probability, the player can identify $T', B'$ so that the symmetric differences $|T \Delta T'|$ and $|B \Delta B'|$ are small.  In other words, with high probability the player can identify the status of almost all vertices.

In this short note we answer these questions; $\Theta(n\log(n))$ queries are required if $|T|-|B| \le n^{1/2-\beta}$ (for any constant $\beta>0$) but $O(n)$ suffice if $|T|=(1/2+\delta)n$ for some $\delta>0$.  Throughout this note we will consider $\delta$, $\epsilon$, and the degree of the graph $G$ to be constants.  We write $o(1)$ for quantities that go to $0$ as $n \to \infty$.

\section{Lower Bound for Simple Majority}

Let $G$ be a $\delta$-good expander with $V(G)=T \sqcup B$ and $|T|-|B|=1$, so that a simple majority of the $n=|V(G)|$ vertices are truthful.  We begin this section with a simple upper bound for this case, which was communicated to the author by Noga Alon.  We will show it is tight.  

\begin{theo}
It suffices to ask $O(n \log n)$ queries to find a $1-2\delta$ fraction of the truthful vertices with high probability.
\end{theo}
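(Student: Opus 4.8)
The plan is to reduce the noisy game to the noiseless game of Theorem~\ref{thm11} by a straightforward amplification. Since $G$ has constant degree it has $O(n)$ edges, hence $O(n)$ ordered pairs $(u,v)$ with $uv\in E(G)$. For each such pair I would query $u$ about $v$ exactly $C\log n$ times, where $C=C(\epsilon)$ is a large constant, and record the majority answer; this uses $O(n\log n)$ queries in total. The point is to replace each single (noisy) answer by a single (reliable) ``amplified'' answer.

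Next I would argue that, with probability $1-o(1)$, every truthful vertex's amplified report about every neighbor is the true status of that neighbor. Fix a pair $(u,v)$ with $u\in T$: the $C\log n$ answers are independent and each equals the true status of $v$ with probability $1-\epsilon>1/2$, so by a Chernoff bound the majority answer is wrong with probability at most, say, $n^{-10}$, once $C$ is large enough in terms of $\epsilon$. A union bound over the $O(n)$ incident ordered pairs then gives the claim. Corrupt vertices may of course report arbitrarily, but their amplified answers are no more adversarial than their single answers, which is precisely what the noiseless model already permits; note also that adaptivity of the adversary is harmless here since the querying schedule is fixed and the concentration for truthful vertices holds regardless.

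Conditioned on this good event, the player's data --- one reported status per ordered incident pair, correct whenever the reporter is truthful --- is exactly an instance of the noiseless corruption-detection game on the $\delta$-good expander $G$ with partition $V(G)=T\sqcup B$, where $|T|=|B|+1>|B|$. Applying Theorem~\ref{thm11} produces $T'\subseteq T$ and $B'\subseteq B$ with $|T'\cup B'|>(1-\delta)n$. Since $|B'|\le|B|=(n-1)/2$, this gives $|T'|=|T'\cup B'|-|B'|>(1-\delta)n-n/2=(1/2-\delta)n$, while $|T|=(n+1)/2$, so $T'$ captures at least a $(1-2\delta-o(1))$ fraction of the truthful vertices, as required.

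I do not expect a serious obstacle: the whole content is the reduction, and the only points needing care are (i) choosing the amplification constant $C$ large enough relative to $\epsilon$ that the Chernoff estimate survives the union bound over all $O(n)$ incident pairs (using independence of repeated queries to a truthful vertex), and (ii) verifying the elementary arithmetic in the last step, namely that total coverage $(1-\delta)n$ forces $T'$ to miss only a $\approx 2\delta$ fraction of $T$ because $|B|\approx n/2$. If one wants the clean bound ``$1-2\delta$'' rather than ``$1-2\delta-o(1)$'', one simply absorbs the $O(1/n)$ loss into the $o(1)$ convention used throughout the note.
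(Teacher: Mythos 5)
Your proposal is correct and follows essentially the same route as the paper: amplify each edge query $O(\log n)$ times, take majority votes, use Chernoff plus a union bound to ensure all truthful reporters' amplified answers are correct with probability $1-o(1)$, and then invoke Theorem~\ref{thm11}. The only addition is your explicit final arithmetic extracting the $1-2\delta$ fraction of $T$ from the guarantee $|T'\cup B'|>(1-\delta)n$, which the paper leaves implicit and which you carry out correctly.
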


\begin{proof}
	The player can perform the following algorithm.  For each $(u,v) \in E(G)$ the player asks $u$ about the status of $v$ a total of $c\log(n)$ times for some $c=c(\epsilon)$ and then takes the majority of the answers given, breaking ties arbitrarily.  If $c$ is chosen appropriately, then for each pair $(u,v) \in V(G)$ so that $u$ is truthful, with probability more than $1-\frac{1}{n^2}$ most of the responses that $u$ gives about $v$ are correct.  Hence the probability that this holds for every pair $(u,v) \in E(G)$ with $u$ truthful is $1-O(\frac{1}{n})=1-o(1)$.  Then for all $(u,v)$ with $u$ truthful, most of $u$'s opinions about $v$ will be correct, so this reduces to precisely Theorem~\ref{thm11}, the deterministic case considered in \cite{amp}.  Hence $O(n \log n)$ queries suffice. \end{proof}

It turns out that $\Omega(n \log n)$ queries are in fact necessary, not just for expanders but for any constant-degree graph $G$, and as long as the number of truthful and corrupt vertices differ by at most $n^{1/2-\beta}$ for a constant $\beta>0$.

\begin{theo}
If $G$ is a constant-degree graph, $|V(G)|=n$, $|T|-|B| \le n^{1/2-\beta}$ for an arbitrary fixed $\beta>0$, and the player only makes $o(n \log n)$ queries, then there exists a strategy for the corrupt vertices so that any strategy the player uses to find $T'$ with $|T \Delta T'| \le \frac{n}{10}$ will succeed with probability at most $\frac{1}{2}+o(1)$.
\end{theo}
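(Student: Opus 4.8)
\medskip
\noindent\emph{Proof idea (proposal).}
The plan is to invoke Yao's principle: it is enough to exhibit one distribution $\mu$ over configurations $(T,B)$ together with one fixed strategy for the corrupt vertices so that \emph{every} deterministic player making $o(n\log n)$ queries outputs a set $T'$ with $|T\Delta T'|\le n/10$ with probability at most $\tfrac12+o(1)$ (over $\mu$ and the noise); a randomized player is a mixture of deterministic ones, hence no better on average. Write $x\in\{0,1\}^{V(G)}$ for the indicator vector of $T$. For $\mu$ I would draw $|T|$ uniformly from an arithmetic progression of $\asymp n^{1/2-\beta}$ consecutive admissible values with $1\le |T|-|B|\le n^{1/2-\beta}$, and then draw $x$ uniformly among all labelings of that weight. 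The decisive choice is the corrupt strategy, which I call \emph{anti-mimicking}: a corrupt vertex $u$ asked about a neighbor $v$ reports the \emph{opposite} of $v$'s true status, flipped with the same probability $\epsilon$ with which a truthful vertex errs. (Corrupt vertices may do anything, so this is legal.)

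The reason anti-mimicking is a good adversary is an algebraic identity: a query ``$u$ about $v$'' returns $x_v$ if $u$ is truthful and $1-x_v$ if $u$ is corrupt, each flipped independently with probability $\epsilon$; that is, it returns $x_u\oplus x_v\oplus 1$ passed through $\mathrm{BSC}(\epsilon)$. Consequently, \emph{for any} adaptive player, the entire transcript is a randomized function of the edge-parity vector $p(x):=(x_u\oplus x_v)_{\{u,v\}\in E(G)}$ alone, and its distribution is in particular invariant under replacing $x$ by its complement $\bar x$. So, information-theoretically, the player only ever sees noisy samples of the edge-parities; to recover $x$ it must (i) reconstruct $p(x)$ from those samples and (ii) decide, between the two labelings $x^\ast$ and $\bar{x^\ast}$ consistent with the reconstructed parities, which one actually occurred --- and for (ii) the only available handle is the cardinality of $T$. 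Adaptivity gives no extra power here, since the transcript depends only on how many times each edge is sampled.

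The core estimate is a reconstruction lower bound: \emph{with $o(n\log n)$ queries, with probability $1-o(1)$ no reconstruction of $x$ --- not even up to global complement --- is within $o(n^{1-\beta})$ Hamming errors of $x^\ast$.} One half is an averaging argument: an edge sampled $m$ times yields its parity with error at least $c_1e^{-c_2 m}$ for constants $c_i=c_i(\epsilon)$, so the expected number of mis-estimated parities is at least $\sum_{e\in E(G)}c_1e^{-c_2 m_e}\ge c_1\,|E(G)|\,e^{-c_2\overline m}$ by convexity, where $\overline m=\big(\textstyle\sum_e m_e\big)/|E(G)|\le o(n\log n)/(n-1)=o(\log n)$; since $|E(G)|\ge n-1$ this is $\Omega(n^{1-\beta})$. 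The other half --- that $\Omega(n^{1-\beta})$ wrong \emph{parities} force $\Omega(n^{1-\beta})$ wrong \emph{labels}, even up to complement --- is where $G$ enters and is the main obstacle: reconstructing $x$ up to complement is exactly decoding the cut code of $G$ over a channel with edge-dependent reliabilities, and one must show the residual parity errors cannot all be ``absorbed'' into tiny cuts of $G$. On a path this is immediate and even gives $\Omega(n)$ label errors, because a single mis-estimated parity flips an entire arc; the delicate case is a well-connected graph such as an expander, where sparse edge sets span cuts whose small side is $\Omega(1)$, so the parity errors scatter into many independent small flips --- one still needs to rule out that the player decodes the parities well enough to leave only $o(n^{1-\beta})$ label errors and then uses the imbalance to undo the complement. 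Making this quantitative lower bound on the label-reconstruction error hold \emph{simultaneously for every bounded-degree $G$} is the step I expect to be hardest, and may force a refinement of the corrupt strategy on highly connected graphs.

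Granting the reconstruction bound, we conclude. Conditioned on the transcript, the posterior on $x$ is supported only on majority-truthful labelings, hence within each parity class only on the representative of weight $>n/2$; so the player's optimal output is to decode to a cut vector and report that distinguished labeling $\widehat x$. By the reconstruction bound $\widehat x$ differs from $x^\ast$ in $R=\Omega(n^{1-\beta})$ places, so its weight is $|T^\ast|+E$, and $E$ --- being the net of $R$ errors that are about equally often $0\!\to\!1$ and $1\!\to\!0$, since $x^\ast$ is uniform --- is of order $\sqrt R=\omega(n^{1/2-\beta})$, while $|T^\ast|$ is confined to a window of width only $\asymp n^{1/2-\beta}$. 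Hence with probability $\tfrac12-o(1)$ the error $E$ is negative of magnitude exceeding $|T^\ast|-n/2$, the weight of $\widehat x$ drops below $n/2$, and the player --- forced to name the ``majority-truthful-looking'' representative --- outputs $\overline{\widehat x}$, which disagrees with $x^\ast$ in $\ge n-R>n/10$ places. Therefore $|T\Delta T'|\le n/10$ with probability at most $\tfrac12+o(1)$. (On fragile graphs like the path the bound is even easier, since then $R=\Theta(n)$ and $T'$ is far from $T$ no matter how the complement is resolved.) Note that $\beta>0$ is used essentially: the forced parity-error count is $\Omega(n^{1-\beta})$, whose square root must dominate the imbalance window $n^{1/2-\beta}$, and this breaks at $\beta=0$ because $|E(G)|=O(n)$ caps the parity-error count at $O(n)$.
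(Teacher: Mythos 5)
Your adversary (corrupt vertices report the negation of the truth, flipped with the same probability $\epsilon$ as truthful vertices) is exactly the one the paper uses, and your observation that every answer is then a noisy copy of the edge parity $x_u\oplus x_v$ --- so that the transcript distribution is invariant under globally complementing the labeling, leaving the size of $T$ as the only symmetry-breaking handle --- is also the symmetry the paper exploits. But the quantitative core of your argument, the ``reconstruction lower bound,'' is precisely the step you leave open, and as you suspect it does not follow from your edge-by-edge averaging: counting edges whose \emph{local} parity estimate errs says nothing about the \emph{globally} optimal decoder, since the parities live in the cut code of $G$ (dimension $n-1$, while $|E(G)|$ can be as large as $dn/2$), and local parity errors on edges lying in short cycles can be corrected. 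Your final step also has an unaddressed conditioning problem: you need the net weight error $E$ of the decoded labeling to be a roughly symmetric random variable of magnitude $\omega(n^{1/2-\beta})$ \emph{conditionally on the transcript}, but the error locations of the player's estimator are determined by the transcript, so the heuristic ``$R$ errors, equally often in each direction, hence $E\sim\sqrt{R}$'' is not justified as stated.

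The paper closes exactly this gap with a vertex-centric rather than edge-centric device. Couple the noise so that each answer is, with probability $2\epsilon$, an independent fair coin (and otherwise a deterministic truth or lie). Call a vertex \emph{obscured} if every query it is involved in falls into the coin-flip case; with $o(n\log n)$ queries at most $\frac{n}{10}$ vertices are involved in more than $c\log n$ queries, and each other vertex is obscured with probability at least $(2\epsilon)^{c\log n}\ge n^{-\beta/2}$ for suitably small $c$, so with high probability at least $n^{1-\beta}$ vertices are obscured. Revealing to the player which answers were coins (which only helps him) makes the obscured set $O$ carry zero information: its split between $T$ and $B$ is hypergeometric, genuinely independent of the remaining transcript, with fluctuation $\Theta(\sqrt{|O|})=\omega(n^{1/2-\beta})$. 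This single device supplies both the $\Omega(n^{1-\beta})$ forced label errors and the clean, symmetric weight fluctuation that your $\sqrt{R}$ heuristic was reaching for; combined with the complement symmetry on $V(G)\setminus O$, it shows the player's best move (guess that the larger consistent class is $T$) succeeds with probability only $\frac{1}{2}+o(1)$. If you want to salvage your route, replace the per-edge averaging by this per-vertex ``all incident information destroyed'' event --- that is the idea your proposal is missing.
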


Note this implies that querying each edge once in each direction is not a viable strategy. 

\begin{proof}
	We will assume $T$ is uniformly chosen over all sets of some size $t$ (where $t$ is known to the player) so that $t-(n-t) \le n^{1/2-\beta}$.  We describe an adversarial strategy for the corrupt vertices.  Corrupt vertices will give the wrong answer with probability $1-\epsilon$ randomly on each query, just as truthful vertices give the right answer with probability $1-\epsilon$ randomly on each query.  
	
	This strategy can be restated as follows.  Imagine that whenever the player queries a corrupt vertex $u$, with probability $1-2\epsilon$ it lies, but with probability $2\epsilon$ a fair coin is flipped, so that the response of $u$ in this case is random noise.  Furthermore, imagine that whenever the player queries a truthful vertex $u$, with probability $1-2\epsilon$ $u$ tells the truth, but with probability $2\epsilon$ a fair coin is flipped, so again the response is random noise.
	
	Now, call a vertex $v$ \emph{obscured} if a coin is flipped every time $v$ is queried or queried about.  Only random noise will enter and leave an obscured vertex.  If $c=c(\epsilon)$ is sufficiently small, a vertex that is involved in at most $c\log(n)$ queries has a probability at least $n^{-\beta/2}$ of being obscured. If the total number of queries is at most $\frac{cn\log(n)}{20}$, then at most $\frac{n}{10}$ vertices are involved in more than $c\log(n)$ queries.  So then in expectation at least $\frac{1}{2}n^{1-\beta/2}$ vertices of $G$ will be obscured, so with high probability at least $n^{1-\beta}$ vertices of $G$ will be obscured.
	
	After all queries are answered, the player is then allowed to learn which query answers were chosen by a coin flip.  As those answers provide no information for the player, we can assume without loss of generality that they are thrown out.  All remaining answers are guaranteed to be false if given by corrupt vertices and true if given by truthful vertices.  We now have a subgraph $G'$ of $G$ some of whose edges are labelled with true and some with false.  All obscured vertices $O$ of $G$ have become isolated.  Let $U=V(G) \setminus O$.
	
	The following lemma implies that after we remove the obscured vertices, the chance that most remaining vertices are truthful is very close to $\frac{1}{2}$.
	
	\begin{lemma}\label{trash} Given a uniformly random 
partition of
$V(G)$ of size $n$ into two sets of fixed size $T,B$ so that $0 \le |T|-|B| \le n^{1/2-\beta}$, if $O$ is a set of size at least $n^{1-\beta}$ vertices of $G$, $Pr[|T \setminus O| \ge |B \setminus O|] \le \frac{1}{2}+o(1)$. \end{lemma}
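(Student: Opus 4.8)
The plan is to reduce the statement to a one‑sided central limit estimate for a hypergeometric random variable. Write $t=|T|$, $b=|B|$, $m=|O|$ and $X=|T\cap O|$. Since $|T\setminus O|=t-X$ and $|B\setminus O|=b-(m-X)$, one has $|T\setminus O|-|B\setminus O|=(t-b)+m-2X$, so the event in question is exactly $\{X\le \frac{(t-b)+m}{2}\}$. When $T$ is uniform over the $t$‑element subsets of $V(G)$, $X$ has the hypergeometric distribution with parameters $(n,m,t)$ (population $n$, $m$ marked, sample size $t$), so $\mathbb E X=tm/n$. Using $t+b=n$, the threshold exceeds $\mathbb E X$ by exactly $\mu:=\frac{(t-b)(n-m)}{2n}$, and $0\le\mu\le\frac{t-b}{2}\le\frac12 n^{1/2-\beta}$. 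Thus the probability we must bound is $\Pr[X\le \mathbb E X+\mu]$ with $\mu\ge 0$.

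Next I would control the scale. For the hypergeometric law, $\operatorname{Var}(X)=\frac{tm(n-t)(n-m)}{n^2(n-1)}$, and since $|T|-|B|=o(n)$ we have $t=(\frac12+o(1))n$ and $n-t=(\frac12+o(1))n$, whence $\operatorname{Var}(X)=\Theta\!\big(\frac{m(n-m)}{n}\big)$. Write $\sigma=\operatorname{Var}(X)^{1/2}$. From $m\ge n^{1-\beta}$ (and, as discussed below, $n-m$ not too small) one gets $m(n-m)\gg n^{2-2\beta}$, hence $\sigma\gg n^{1/2-\beta}$; combined with the bound $\mu\le\frac12 n^{1/2-\beta}$ this gives $\mu/\sigma=O(n^{-\beta/2})=o(1)$, and also $\sigma\to\infty$.

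Finally I would apply the central limit theorem for the hypergeometric distribution — equivalently, the Berry–Esseen bound for a sum of a sample drawn without replacement — which is valid here because $t$, $n-t$, $m$ and $n-m$ all tend to infinity: uniformly in $x\in\mathbb R$, $\Pr[X\le\mathbb E X+x\sigma]=\Phi(x)+o(1)$. Taking $x=\mu/\sigma$ and using $\mu/\sigma\to 0$ together with continuity of $\Phi$ at $0$ yields $\Pr[X\le\mathbb E X+\mu]=\Phi(\mu/\sigma)+o(1)=\frac12+o(1)$, as required.

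The one genuine subtlety is the variance estimate: the whole scheme needs $\operatorname{Var}(X)\to\infty$ and $\mu=o(\sigma)$, which requires not only $|O|\ge n^{1-\beta}$ but also $|V(G)\setminus O|\gg n^{1-2\beta}$ (for instance, for a fixed set $O$ with $|V(G)\setminus O|$ bounded the quantity $|T\setminus O|-|B\setminus O|$ is governed by $O(1)$ coin flips and the conclusion can fail). In the use of the lemma this is harmless: if $|V(G)\setminus O|$ is that small then the player has essentially no information about the membership in $T$ of the $\ge n-o(n)$ vertices of $O$, and one checks directly that $|T\Delta T'|=\Omega(n)$ with high probability, so that regime never invokes the lemma. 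Granting this, the proof is just the computation above together with a black‑box central limit theorem for hypergeometric sampling.
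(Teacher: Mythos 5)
Your proof is correct and reaches the right conclusion, but it takes a genuinely different route from the paper's. You identify the event $|T\setminus O|\ge|B\setminus O|$ exactly as the one-sided tail event $\{X\le \mathbb{E}X+\mu\}$ for the hypergeometric variable $X=|T\cap O|$, with $0\le\mu\le\tfrac12 n^{1/2-\beta}=o(\sigma)$, and then invoke a full central limit theorem for sampling without replacement. The paper instead symmetrizes: it splits $T=T_1\sqcup T_2$ with $|T_1|=|B|$ and $|T_2|=|T|-|B|\le n^{1/2-\beta}$, uses the exchangeability of $T_1$ and $B$ to get $\Pr[|T_1\cap O|-|B\cap O|>n^{1/2-\beta}]=\Pr[|B\cap O|-|T_1\cap O|>n^{1/2-\beta}]$, and therefore needs only an \emph{anticoncentration} (local limit) input --- that the difference lands in a window of width $2n^{1/2-\beta}$ with probability $o(1)$ --- to conclude that each one-sided event has probability at least $\tfrac12-o(1)$. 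The symmetry trick thus gets by with a weaker analytic ingredient (point masses of size $O(1/\sigma)$) than your distributional CLT, while your computation is more direct and pins the probability down as exactly $\tfrac12+o(1)$. The subtlety you flag is genuine and applies equally to the paper's own argument: if $|V(G)\setminus O|$ is too small (in the extreme case $O=V(G)$ both set differences are empty and the probability is $1$), the relevant variance degenerates and the stated bound fails; the paper's claim that $\Pr[\,||T_1\cap O|-|B\cap O||\le n^{1/2-\beta}\,]=o(1)$ likewise implicitly requires $|V(G)\setminus O|$ to be not too small. As you observe, that regime is harmless where the lemma is invoked, but strictly the lemma needs the additional hypothesis $|V(G)\setminus O|\gg n^{1-2\beta}$ (or a separate treatment of that case).
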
 
	
	\begin{proof} First, by conditioning on $|O|$, note that it suffices to consider any fixed $|O| \ge n^{1-\beta}$.
	
	Now we will consider a random partition of $V(G)$ into sets $T_1, T_2, B$ where $|T_1|=|B|$ and $T_1 \cup T_2=T$.  Then $|T_2| \le n^{1/2-\beta}$.  However, since at least $n^{1-\beta}-n^{1/2-\beta} \ge \frac{1}{2}n^{1-\beta}$ vertices of $T_1 \cup B$ are picked, 
and these are equally likely to come from $T_1$ and $B$, a simple computation (or the local central limit theorem) shows
that
$Pr[\left||T_1 \cap O|-|B \cap O|\right| \le n^{1/2-\beta}]=o(1)$ 
so that 
$$Pr[|T_1 \cap O|-|B \cap O|>n^{1/2-\beta}] \ge \frac{1}{2}-o(1)$$ 
and the lemma follows.	
	\end{proof}
		
	Now for every partition $(X,Y)$ of $V(G) \setminus O$ so that $(X,Y)=(T \cap U, B \cap U)$ and $(X,Y)=(B \cap U,T \cap U)$ are consistent (i.e. no $x \in X$ calls $y \in Y$ truthful in $G'$ or vica versa, no $x_1 \in X$ calls $x_2 \in X$ corrupt in $G'$, and similar for $y_1,y_2 \in Y$) such that $|X| \ge |Y|$, we have by symmetry that $$\Pr[(X=T \cap U) \wedge (Y=B \cap U) \Big| |X|=|T \cap U|, |Y|=|B \cap U|]$$ $$=\Pr[(Y=T \cap U) \wedge (X=B \cap U) \Big| |Y|=|T \cap U|, |X|=|B \cap U|]$$ assuming that the latter events have nonzero probability. Because $|T| \ge |B|$ and $|X| \ge |Y|$,
		
	$$\Pr[(|X|=|T \cap U|) \wedge (|Y|=|B \cap U|)] \ge \Pr[(|Y|=|T \cap U|) \wedge (|X|=|B \cap U|)].$$
	
	Therefore, by Bayes, we have that
	
	$$\Pr[(X=T \cap U)) \wedge (Y=B \cap U)] \ge \Pr[(Y=T \cap U) \wedge (X=B \cap U))].$$
		
This means that if an oracle gives the player $X$ and $Y$ so that one of them is $T \cap U$ and the other is $B \cap U$, then the strategy that gives him the greatest chance of success (assuming $|O|=o(n)$, which happens with high probability) is to guess that $T'$ is whichever of $X,Y$ is larger.   However, this strategy will still succeed only a $\frac{1}{2}+o(1)$ fraction of the time, because with probability $\frac{1}{2}-o(1)$ we have $|B \cap U|>|T \cap U|$ by Lemma~\ref{trash}. \end{proof} 

\section{Upper Bound for Robust Majority} 

Say now that $|T|=(\frac{1}{2}+\delta)n$ for some fixed $\delta>0$.  We will assume now that $G$ is a \emph{$\delta$-excellent expander}, a graph so that any two disjoint subsets of $V(G)$ of size $\delta n$ have an edge between them.  Note that spectral expanders with appropriate parameters satisfy this stronger hypothesis, but it is still weaker than spectral expansion.

\begin{theo}
On a constant-degree, $\delta$-excellent expander $G$, there exists $C$ depending on the maximum degree of the graph and on $\delta$ so that with high probability, in at most $Cn$ queries we can find $T', B'$ with $|T \Delta T'|, |B \Delta B'| \le 6\delta n$.	
\end{theo}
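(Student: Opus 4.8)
The plan is a two-phase argument: a cheap noise-removal phase costing only a constant factor in the number of queries, followed by a purely combinatorial phase on the resulting almost-deterministic instance. \emph{Phase 1 (noise removal).} For each ordered pair $(u,v)$ with $uv\in E(G)$ the player asks $u$ about $v$ exactly $k$ times, where $k=k(\eps,\delta,d)$ is a large constant and $d$ is the maximum degree, and records the majority of the $k$ answers as the \emph{reported label} of $(u,v)$; this uses at most $kdn=O(n)$ queries. If $u$ is truthful, each of its $k$ answers about $v$ is correct independently with probability $1-\eps$, so its reported label is wrong with probability at most $p(k)$, and $p(k)\to 0$ as $k\to\infty$. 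Call a directed edge out of a truthful vertex \emph{bad} if its reported label is wrong; since the relevant coin flips are independent, a Chernoff bound shows that with high probability there are at most $2p(k)dn$ bad edges. Fixing $k$ so that $2p(k)d\le\delta/2$, we may henceforth assume that the reported labels out of every vertex of some set $T_0\subseteq T$ with $|T\setminus T_0|\le\delta n/2$ are all correct, that the total number of bad edges is at most $\delta n/2$, and that the reported labels out of corrupt vertices are arbitrary.

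\emph{Phase 2 (search on the deterministic instance).} For $X\subseteq V(G)$, say a directed edge $(u,v)$ with $u\in X$ is a \emph{conflict of $X$} if either $v\in X$ and $u$ reports $v$ corrupt, or $v\notin X$ and $u$ reports $v$ truthful, and let $\mathrm{conf}(X)$ be the number of conflicts of $X$. The player outputs $T'=X^{\ast}$ and $B'=V(G)\setminus X^{\ast}$, where $X^{\ast}$ is a set of size $|T|$ minimizing $\mathrm{conf}$. (We only need $O(n)$ queries; Phase 2 makes none, so its running time is irrelevant, although one can likely make it efficient by mimicking the linear-time algorithm of Theorem~\ref{thm111}.) Because every reported label out of $T_0$ is correct, a conflict of $T$ can only occur along a bad edge, so $\mathrm{conf}(T)\le\delta n/2$ and hence $\mathrm{conf}(X^{\ast})\le\delta n/2$.

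\emph{The crux: any $X$ with $|X|=|T|$ and $\mathrm{conf}(X)\le\delta n/2$ satisfies $|X\Delta T|\le 6\delta n$.} Write $m=|X\cap B|$; since $|X|=|T|$ we get $|T\setminus X|=m$, so $|X\Delta T|=2m$ and it suffices to show $m\le 3\delta n$. Let $R=X\cap T_0$ and let $R'$ consist of those $t\in R$ with no corrupt neighbour inside $X$ and no truthful neighbour outside $X$; every vertex of $R\setminus R'$ is the source of a distinct conflict of $X$, so $|R'|\ge|R|-\delta n/2$. Counting sizes and using $|T|-|B|=2\delta n$ gives $|R|\ge|T|-m-|T\setminus T_0|\ge 2\delta n-\delta n/2$, hence $|R'|\ge\delta n$. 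But by definition $R'$ has no neighbour in $(T\setminus X)\cup(B\cap X)$, a set disjoint from $R'$ of size exactly $2m$. If $m>3\delta n$, then $R'$ and $(T\setminus X)\cup(B\cap X)$ are disjoint sets each of size at least $\delta n$ with no edge between them, contradicting $\delta$-excellence of $G$. Hence $m\le 3\delta n$, so $|T\Delta T'|\le 6\delta n$, and $|B\Delta B'|$ equals the same quantity since complementation preserves symmetric differences; this proves the theorem with $C=kd$.

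I expect the crux step to be the main obstacle. Its moral content is that internal consistency of $X$ is \emph{not} by itself enough: the corrupt vertices can collude so that an entirely wrong set looks perfectly consistent, and the argument must genuinely combine $\delta$-excellence with the strict inequality $|T|>|B|$ — in fact with the robust gap $|T|=(\tfrac12+\delta)n$, which is exactly where that hypothesis (rather than merely $|T|>|B|$) enters. This is the same phenomenon exploited in the deterministic Theorem~\ref{thm11}, but here the reasoning must additionally absorb the $O(\delta n)$ residual errors and misclassified vertices left by Phase 1, which is why everything above is phrased in terms of the "effective" accurate set $T_0$ rather than $T$ itself.
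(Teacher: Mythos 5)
Your proof is correct, and it reaches the stated bound by a genuinely different route from the paper's. Both arguments share Phase 1 (constant-factor amplification so that only $O(\delta n)$ directed edges out of truthful vertices carry a wrong majority label) and both then perform an exponential-time search, but the certificates differ. The paper searches for a pair of disjoint sets $X,Y$, each of size $(\frac14+\frac12\delta)n$, with fewer than $\delta n$ cross-edges on which one endpoint calls the other corrupt, and outputs $X\cup Y$; its key step is that $X\cup Y$ must contain $2\delta n$ truthful vertices, so if (say) $Y$ had $3\delta n$ corrupt vertices, $\delta$-excellence would force $2\delta n$ edges from a $\delta n$-sized truthful part of $X$ into them, at least $\delta n$ of which would be correctly reported accusations. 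You instead minimize a global consistency score over single candidate sets of size $|T|$ (an empirical-risk-minimization certificate), and your crux lemma shows any low-conflict set of the right size is close to $T$ because a $\delta n$-sized set $R'$ of conflict-free, correctly reporting truthful vertices inside $X$ would otherwise have no edge to the $2m$-sized misclassified set $(T\setminus X)\cup(B\cap X)$, contradicting $\delta$-excellence. The underlying mechanism (expansion, the robust majority, and a bounded number of erroneous truthful reports) is the same, but your single-set formulation exploits both the ``truthful'' and ``corrupt'' reports on all edges rather than only cross-accusations between the two halves, and it actually yields the sharper conclusion $m<\delta n/2$, i.e.\ $|T\Delta T'|<\delta n$, comfortably inside the $6\delta n$ target; it also avoids the slightly delicate symmetric-difference bookkeeping at the end of the paper's argument.
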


\begin{proof}
Query each edge of the graph $G$ in both directions $c$ times, for some $c$ depending on $\delta$ and the maximum degree, i.e. for every $(u,v) \in E(G)$ ask $u$ about the status of $v$ $c$ times and $v$ about the status of $u$ $c$ times, and take the majority responses each time.  In doing so, we can amplify the error probability $\epsilon$ down to some function $\varepsilon$ of $\delta$ and $\epsilon$.

Thus we can assume a $1-\varepsilon$ accuracy rate, and that each edge of the graph is queried once in both directions.  If $\varepsilon$ is chosen appropriately, then with high probability the number of edges $(u,v) \in E(G)$ so that $u$ is truthful but reports incorrectly about $v$ is less than $\delta n.$  

Let $T=T_1 \sqcup T_2$ be a split of $T$ into two sets of $(\frac{1}{4}+\frac{1}{2}\delta)n$ vertices.  By assumption, there are fewer than $\delta n$ edges $(u,v) \in E(G)$ with $u \in T_1, v \in T_2$ so that at least one of $u,v$ calls the other corrupt.

Now say there are disjoint $X,Y \subset V(G)$ with $|X|=|Y|=(\frac{1}{4}+\frac{1}{2}\delta)n$ so that there are fewer than $\delta n$ pairs $(x,y)$ with $x \in X, y \in Y$, $(x,y) \in E(G)$ so that one of $x,y$ calls the other corrupt. Then $X \cup Y$ contains at least $(\frac{1}{2}+\delta)n+(\frac{1}{2}+\delta)n-n=2\delta n$ truthful vertices.  Assume without loss of generality that $X$ does not have fewer truthful vertices than $Y$, so that $X$ contains a set $X'$ of at least $\delta n$ truthful vertices.  

Now, assume that $Y$ contains a set $Y'$ of at least $3\delta n$ corrupt vertices.  Then by the expansion property, only at most $\delta n$ vertices of $Y'$ do not have a neighbor in $X'$, so that at least $2\delta n$ vertices of $Y'$ have a neighbor in $X'$.  Hence there are at least $2\delta n$ pairs $(x',y') \in E(G)$ with $x' \in X', y' \in Y'$.  For at least $\delta n$ of these, $x'$ reports correctly and calls $y'$ corrupt, a contradiction.  Hence $Y$ has fewer than $3\delta n$ corrupt vertices.  By assumption, $|X \cap T| \ge |Y \cap T|$, so $|X \Delta T| \le |Y \Delta T|$ and $|(X \cup Y) \Delta T| \le 2|Y \Delta T| \le 6\delta n$.  

Thus, our algorithm will look for two disjoint sets of
vertices $X,Y$  with $|X|=|Y|=(\frac{1}{4}+\frac{1}{2}\delta) n$ so that fewer than $\delta n$ pairs $x \in X, y \in Y$ are such that $(x,y) \in E(G)$ and one of $x,y$ calls the other corrupt.  With high probability such sets $X,Y$ exist, and in fact arbitrary $X,Y$ with $X \cup Y=T$ suffice.  For such $X,Y$ we have that $|(X \cup Y) \Delta T| \le 6 \delta n.$ \end{proof}

\section{Discussions and Open Problems}

There are several natural questions here.   The first is to discern how many queries are needed when $|T|-|B|$ is sublinear but too big for our argument to work, say $n^{1/2}$, although the lower bound for the $|T|-|B| \le n^{1/2-\beta}$ case will give something here as well.  It would be interesting to see in what regime we require only $o(n \log n)$ queries.  Is there a transition at $|T|-|B|=n^{\gamma}$ for some $\gamma<1$?  

The other natural question is to figure out whether or not the algorithm for the $|T|=(\frac{1}{2}+\delta)n$ case can be made efficient.  The algorithm we present here takes exponential time but it is not at all clear that this is necessary, and in fact we conjecture that it is not.  It may also be interesting to figure out whether the assumptions on the expansion in the upper bound can be relaxed, perhaps to the original $\delta$-good expansion assumption of \cite{amp}.  We conjecture that it can be.  Noga Alon points out that there is a trivial algorithm when the graph is a good spectral expander.  In that case if we simply query all edges (after enough amplification), and for any $u$ we take the majority reports of its neighbors, we will correctly determine the status of almost all vertices with high probability.  But this 
crucially uses that for most vertices $u$, the neighborhood 
$N(u)$ has a robust majority of truthful vertices. This follows
from an appropriate spectral expansion, but is a stronger assumption than the one we make in this note.

Furthermore, our proof of the lower bound inspires the following question.  What if the task is instead to find two sets of vertices, one of which has small symmetric difference with the set of all truthful vertices, and the other of which has small symmetric difference with the set of all corrupt vertices, but without specifying which is which?  This task may be easier, although we conjecture it is not unless the corrupt vertices almost always lie.  Noga Alon also suggests a generalization of this, a ``list-coloring" version of the problem, i.e. what if one wants to find a small number of candidates for $T$ so that some candidate for $T$ is close to $T$?  We feel the player should be able to learn some information about $T$.

Finally, it is worth mentioning that in their paper \cite{amp}, Alon, Mossel, and Pemantle considered the directed setting.  Our results generalize in a fairly straightforward way to the directed setting, albeit again with analogous slightly stronger guarantees for expansion necessary in the ``robust majority" case. 

\section{Acknowledgments} The author would like to thank Noga Alon, Brice Huang, and Mehtaab Sawhney for proofreading this note.  He would also like to thank Mark Sellke for drawing attention to conditional issues in an earlier draft.  Finally, the author would like to thank an anonymous referee for suggesting several edits.


\begin{thebibliography}{99}

\bibitem{amp}
N. Alon, E. Mossel, and R. Pemantle, Corruption Detection on Networks, 
preprint, 2015 (to appear in Theory of Computing). 

\end{thebibliography}
\end{document}